\newcommand{\tra}{{\mathsmaller T}}
\newcommand{\Bt}{{\mathsmaller B_t}}
\newcommand{\Btm}{{\mathsmaller B_{t-1}}}
\newtheorem{thm}{Theorem}
\title{\LARGE \bf
Self-Tuning Network Control Architectures
}
\author{ Tyler Summers \quad Karthik Ganapathy \quad Iman Shames \quad Mathias Hudoba de Badyn
\thanks{T. Summers and K. Ganapathy are with the Control, Optimization, and Networks Lab, University of Texas at Dallas (email: \{tyler.summers\}@utdallas.edu). This material is based on work supported by the United States Air Force Office of Scientific Research under award number FA2386-19-1-4073 and by the National Science Foundation under award number ECCS-2047040. I. Shames is the the Australian National University. M. Hudoba de Badyn is with the Automatic Control Laboratory at ETH Z\"urich.}
\\
\fbox{
\parbox{6.25in}{Accepted for publication in 61st Conference on Decision and Control pp.~5876--5881 \textcopyright 2022 IEEE.}
}
\vspace{-0.785cm}
}
\begin{document}

\maketitle
\thispagestyle{empty}
\pagestyle{empty}

\begin{abstract}
We formulate a general mathematical framework for self-tuning network control architecture design. This problem involves \emph{jointly} adapting the locations of active sensors and actuators in the network and the feedback control policy to all available information about the time-varying network state and dynamics to optimize a performance criterion. We propose a general solution structure analogous to the classical self-tuning regulator from adaptive control. We show that a special case with full-state feedback can be solved in principle with dynamic programming, and in the linear quadratic setting the optimal cost functions and policies are piecewise quadratic and piecewise linear, respectively. For large networks where exhaustive architecture search is prohibitive, we describe a greedy heuristic for joint architecture-policy design. We demonstrate in numerical experiments that self-tuning architectures can provide dramatically improved performance over fixed architectures. Our general formulation provides an extremely rich and challenging problem space with opportunities to apply a wide variety of approximation methods from stochastic control, system identification, reinforcement learning, and static architecture design.
\end{abstract}

\section{Introduction} \label{sec:intro} 
Emerging complex dynamical networks present tremendous challenges and opportunities to fundamentally reimagine their control architectures and algorithms. In large-scale networks, the structure of the control architecture -- the locations of sensors, actuators, and their communication pattern -- is crucial to performance and robustness properties and an important design consideration. Much of control theory operates with fixed control architectures, with the design focused almost entirely on the control policy rather than the architecture. There is a long history in adaptive control and reinforcement learning of adapting policy parameters online to measured data and/or identified models, but these ideas have never been applied to the architecture itself in a feedback loop with measured data. Here we propose \emph{self-tuning network control architectures} that jointly adapt the policy \emph{and} architecture online to measured data, in analogy to the classical self-tuning regulator in adaptive control \cite{astrom2013adaptive}.

Such self-tuning architectures are compelling for emerging large infrastructure networks and complex, high-dimensional networks with time-varying phenomena. This includes, for example, power grids with massive penetration of inverter-based resources, mixed-mode transportation networks, epidemic/information spread in social networks, and economic activity in large economies. New sensing and actuation technologies are being rapidly integrated and offer an increasingly large number of points from which to estimate and control complex dynamic phenomena. However, resource and budget constraints may limit the number of sensors and actuators that are active at a given time. When the network state and/or dynamics evolve to conditions that limit the effectiveness of a fixed set of sensors and actuators, self-tuning architectures can provide significantly improved performance and robustness.


There is a vast and rapidly growing literature in adaptive control \cite{astrom2013adaptive} and reinforcement learning (RL) \cite{sutton2018reinforcement,bertsekas2019reinforcement} that focuses on adapting policy parameters based on measured data. The self-tuning regulator \cite{astrom2013adaptive} is a prototypical adaptive control approach where model parameters related to the system dynamics are estimated online from data and then used to adapt the parameters of a feedback policy through a control design procedure. This approach can flexibly accommodate many combinations of parameter estimation and control design methodologies. There are several other approaches in adaptive control and RL of indirect and model-free flavors that adapt policy parameters through estimation of other quantities, such as value functions and policy gradients. However, most work in adaptive control and RL focuses on individual systems, and work in a network context utilizes architectures with fixed locations for sensors and actuators.

Designing network control architecture to optimize controllability and observability metrics has received considerable attention in recent years \cite{olshevsky2014minimal,pasqualetti2014controllability,tzoumas2015minimal,summers2016submodularity,ruths2014control, de2020mathcal,foight2020performance}. However, the architecture design is largely treated as a single static design problem. Some works have studied time-varying actuator scheduling \cite{zhao2016scheduling,nozari2017time,siami2020deterministic,olshevsky2020relaxation,siami2020separation}, but the architecture, while time-varying, remains in open-loop and does not adapt to changing network state or dynamics. Very recent work has considered selecting actuators for uncertain systems based on data measured from the system in limited settings with linear systems and specific controllability metrics \cite{fotiadis2021learning,ye2022online}.

Here we aim to bring together in a broad way network control architecture design with feedback, adaptive control, and RL. Our main contributions are as follows:
\begin{enumerate}
    \item We formulate a general mathematical framework for self-tuning network control architecture design and propose a general solution structure analogous to the classical self-tuning regulator from adaptive control.
    \item For a special case with full-state feedback and known dynamics, we show that dynamic programming can solve the problem in principle, which couples feedback policy design with a search over architecture combinations. 
    \item In the linear quadratic setting, we show that the optimal cost functions and policies are piecewise quadratic and affine, respectively. We also propose a computationally tractable greedy heuristic for self-tuning LQR architectures.
    \item We demonstrate in numerical experiments that self-tuning architectures can provide dramatically improved performance over fixed architectures.
\end{enumerate}
Our general formulation provides an extremely rich and challenging problem space with opportunities to apply a wide variety of approximation methods from stochastic control, system identification, reinforcement learning, and static architecture design.

\section{Self-Tuning Network Control Architectures} \label{sec:prelim}
We first formulate a general mathematical framework for self-tuning network control architectures. Consider a dynamical network with underlying graph $G = (V, E(t))$, where $V=\{1,...,n\}$ is a set of nodes and $E(t) \subseteq V \times V$ is a time-varying set of edges connecting nodes over a discrete time horizon $t\in[0,\dots,\mathcal{T}]$. We associate a state variable $x_i(t) \in \mathcal{X}_i$ with node $i \in V$. The network state is $x(t) = [x_1(t), x_2(t), ..., x_n(t)] \in \mathcal{X} = \Pi_{i=1}^n \mathcal{X}_i$. The edges represent dynamical interactions between nodal states.

\begin{figure}[ht]
    \centering
    \includegraphics[width=\linewidth]{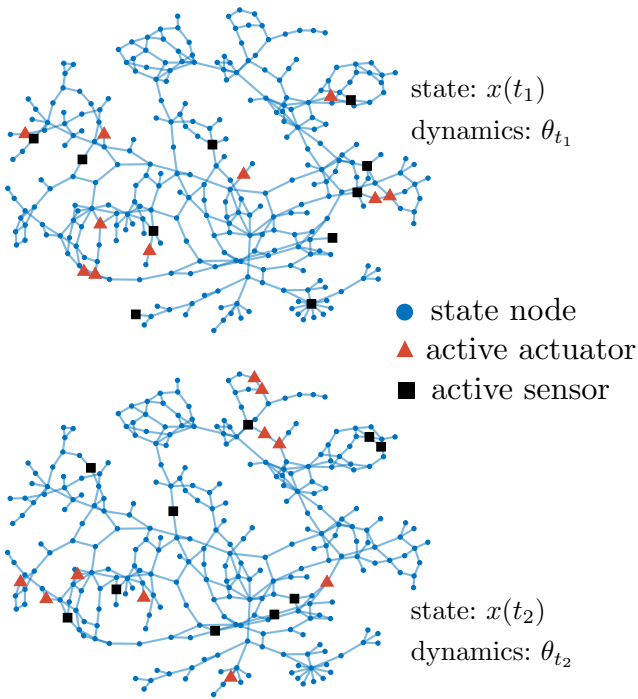}
    \caption{Illustration of a Self-Tuning Network Control Architecture with sensor and actuator locations adapting to the time-varying network state and dynamics.}
    \label{fig:selftuningarch_1}
\end{figure}
We define a finite set of possible actuator locations in the network $\mathbf{U} = \{ u_1, u_2, ..., u_M \}$, each of which corresponds to an input signal $u_i(t) \in \mathcal{U}_i$ that affects the state dynamics in a distinct way. Similarly, we define a finite set of possible sensor locations $\mathbf{Y} = \{ y_1, y_2, ..., y_P \}$, each of which corresponds to a distinct output measurement $y_i(t) \in \mathcal{Y}_i$ related to the network state. For a subsets of actuators $S_t \subset \mathbf{U}$ and sensors $T_t \subset \mathbf{Y}$ that are active at time $t$ and specify input and output signals $u^{S_t}(t) \in \Pi_{i\in S_t} \mathcal{U}_i$ and $y^{T_t}(t) \in \Pi_{i\in T_t} \mathcal{Y}_i$, respectively, the network dynamics are given by
\begin{align}
    x(t+1) &= f_{\theta_t}^{S_t}(x(t), u^{S_t}(t), w(t)) \\
    y^{T_t}(t) &= h_{\theta_t}^{T_t}(x(t), v(t)), 
\end{align}
where $w(t)$ is a stochastic disturbance iid from a distribution function $P_w^{\theta_t}$, $v(t)$ is measurement noise iid from a distribution function $P_v^{\theta_t}$, and $\theta_t$ is a (generally) unknown and time-varying dynamics parameter specifying the dynamics map $f_{\theta_t}^{S_t}$ and measurement map $h_{\theta_t}^{T_t}$. Broadly, the goal is to \emph{jointly} adapt the active sensors and actuators and the control inputs to all available information about the time-varying network state and dynamics parameter to optimize a performance objective. Certain network states and dynamics may render a fixed network control architecture ineffective, which motivates adapting not just the input signal, but also the architecture. When the dynamics parameter is unknown, this may involve a combination of statistical estimation/identification of $\theta_t$ and control design based on an estimate for $\theta_t$ and a representation of the estimation uncertainty.

\subsection{A Cardinality-Constrained Architecture Design Problem} Let $\mathbf{U}_K = \{ S \in 2^\mathbf{U} \mid |S| = K\}$ and $\mathbf{Y}_L = \{ T \in 2^\mathbf{Y} \mid |T| = L\}$ denote all possible actuator and sensor subsets of cardinality $K$ and $L$, respectively. Let $y_{0:t} = [y^{T_0}(0), y^{T_1}(1), ..., y^{T_t}(t)] \in \mathcal{Y}_{0:t}$ and $u_{0:t-1} = [u^{S_0}(0), u^{S_1}(1), ..., u^{S_{t-1}}(t-1)] \in \mathcal{U}_{0:t-1}$ denote the output and input histories, respectively. We define an \emph{architecture-policy} $\pi_t : \mathcal{Y}_{0:t} \times \mathcal{U}_{0:t-1} \rightarrow \mathbf{Y}_L \times \mathbf{U}_K \times \Pi_{i\in S_t} \mathcal{U}_i$ with $S_t \in \mathbf{U}_K$ as a mapping from the input-output history to sensor and actuator subsets and the next input and output, so that $u^{S_t}(t) = \bar \pi_t(y_{0:t}, u_{0:t-1})$ and $y^{T_{t+1}}(t+1) = h_{\theta_{t+1}}^{T_{t+1}}(x(t+1), v(t+1))$. Thus, an architecture-policy specifies \emph{both} a set of active sensors and actuators at each time \emph{and} a feedback policy specifying the control input for active actuators at each time; i.e., $\pi_t$ defines the triple $(S_t, T_{t+1}, \bar \pi_t)$ specifying which actuators to utilize next, which measurements to collect next, and which input values to apply based on all available information. Each component of $(S_t, T_{t+1}, \bar \pi_t)$ depends on the available information. For a finite time horizon of length $\mathcal{T}$, we define $\pi = [\pi_0, \pi_1, ..., \pi_{\mathcal{T}-1}]$.

For a given architecture-policy $\pi$ with $u^{S_t}(t) = \bar \pi_t(y_{0:t,} u_{0:t-1})$, we define a cost function for initial state $x(0) = x$
\begin{equation}
    J_\pi(x) = \mathbf{E}_{w,v} \sum_{t=0}^{\mathcal{T}-1} c_t(x(t), u^{S_t}(t)) + c_\mathcal{T}(x(\mathcal{T})),
\end{equation}
where $c_t : \mathcal{X} \times \Pi_{i\in S_t} \mathcal{U}_i \rightarrow \mathbf{R}$ is a stage cost function, $c_\mathcal{T} : \mathcal{X} \rightarrow \mathbf{R}$ is a terminal cost function, and expectation is taken with respect and disturbance and measurement noise sequences.
The self-tuning network architecture design problem is then to find an optimal architecture-policy, i.e.
\begin{equation} \label{optimalcost}
    J^*(x) = \min_\pi J_\pi(x), \quad\quad  \pi^* \in \arg \min_\pi J_\pi(x).
\end{equation}
This problem is extremely challenging, as it combines already challenging (stochastic, nonlinear, output-feedback, data-driven) feedback control design with a combinatorial architecture search. Generally this of course will require approximation and heuristics for both control and architecture design. Nevertheless we believe this general formulation provides an extremely rich problem space with many exciting possibilities to apply a wide variety of approximation methods from stochastic control, system identification, reinforcement learning, and static network control architecture design.

\begin{figure}[ht]
    \centering
    \includegraphics[width=\linewidth]{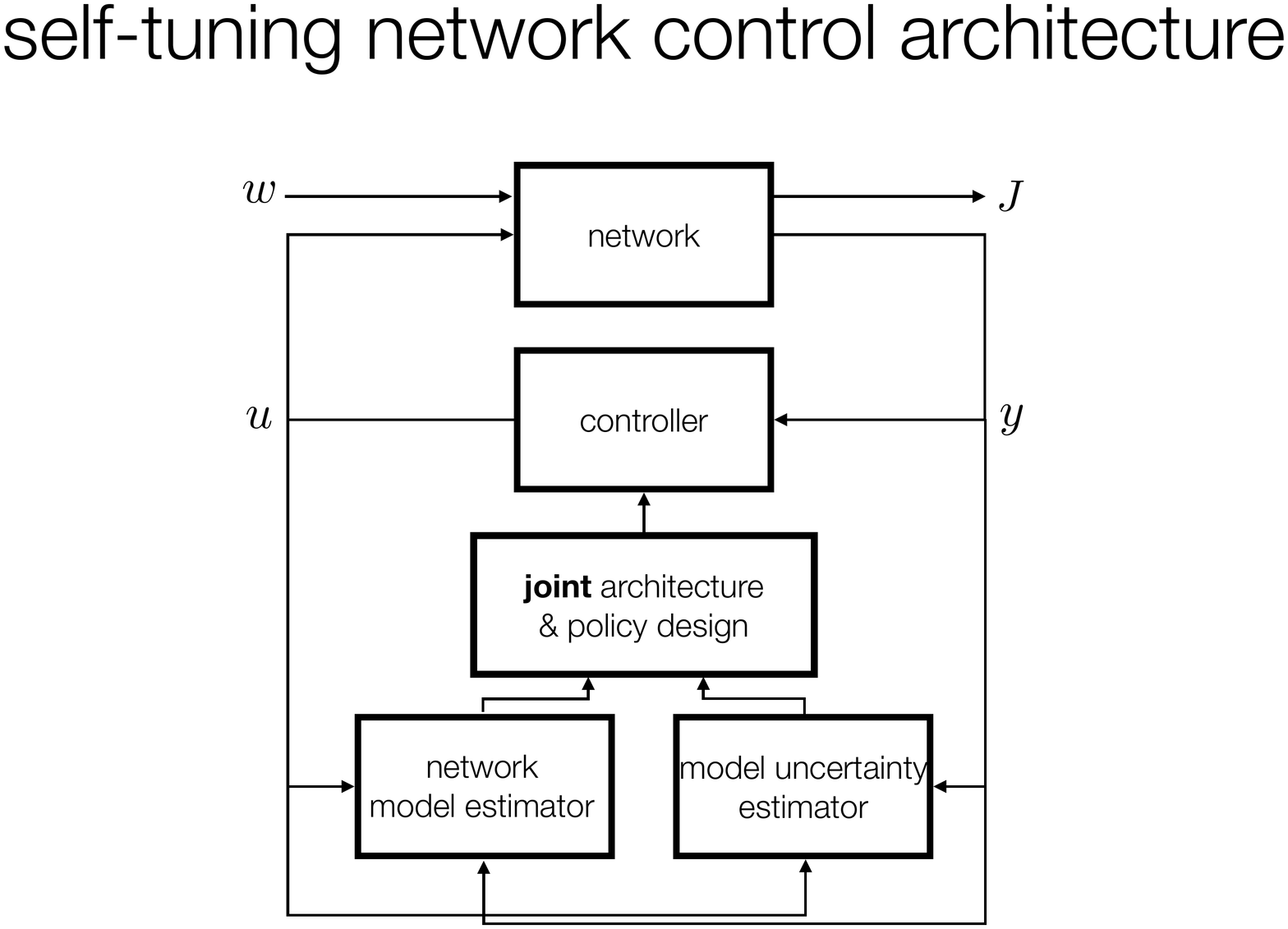}
    \caption{A Self-Tuning Network Control Architecture.}
    \label{fig:selftuningarch_2}
\end{figure}
\subsection{Self-Tuning Architectures} A block diagram illustrating a general self-tuning network control architecture is shown in Figure \ref{fig:selftuningarch_2}, which is analogous to the classical self-tuning regulator from adaptive control. The architecture can be viewed as a pair of coupled loops.  An inner loop features the dynamical network in closed-loop with a feedback controller. An outer loop estimates model parameters $\theta_t$ and a model uncertainty representation (e.g., a probability distribution or an uncertainty set) based on input-output data, which are then fed into a \emph{joint} architecture-controller design module that computes sensor and actuator subsets together with a feedback policy. Our formulation adapts the architecture and policy parameters at the same rate, but it is possible (and potentially computationally advantageous) to use different rates for each of the model estimation, policy parameter adaptation, and architecture adaptation. For example, architecture adaptation may occur at a (much) slower rate than model estimate or policy parameter updates while still providing substantial performance benefits. This approach can also be viewed as maintaining a data-driven ``digital twin'' of the network and adapting both the architecture and policy as the digital twin evolves \cite{tao2018digital}.

The self-tuning architecture structure is highly flexible in terms of the underlying methods for system identification, feedback control design, and network control architecture design. Many combinations have been explored in the literature in the fixed architecture setting. But to our best knowledge, no prior work has considered adaptation of the control architecture.

\section{Dynamic Programming for Self-Tuning Actuators with Full State Feedback}
We consider a special case where the exact network state is available for feedback, and the architecture design problem consists of selecting only actuator subsets at each time based on the current state and dynamics parameter (or an estimate thereof). Let $x_{0:t} = [x(0), x(1), ..., x(t)] \in \mathcal{X}_{0:t}$ denote the network state history. An architecture-policy $\pi_t$ defines an active actuator set and feedback policy $(S_t,\bar \pi_t)$ such that $u^{S_t}(t) = \bar \pi_t(x_{0:t}, u_{0:t-1})$. In general, the self-tuning architecture involves using the available state-input data $(x_{0:t}, u_{0:t-1})$ to estimate the dynamics parameter $\theta_t$, and then selecting an architecture-policy through a control design method. 

When the dynamics parameter $\theta_t$ is known, the Markovian structure allows consideration of architecture-policies of the current state $u^{S_t}(t) = \bar \pi_t(x(t))$. This then permits a dynamic programming solution to the problem stated in the following result.
\begin{thm}
Consider the optimal control problem
\begin{align}
  \begin{array}{ll}
    \min_\pi & \mathbf{E}_{x_0,w} \sum_{t=0}^{\mathcal{T}-1} c_t(x(t), u^{S_t}(t)) + c_\mathcal{T}(x(\mathcal{T}))\\
    \mathrm{subject \ to} & x(t+1) = f_{\theta_t}^{S_t}(x(t), u^{S_t}(t), w(t))\\
  \end{array}\label{eq:3}
\end{align}
The optimal cost function $J^*(x)$ defined in \eqref{optimalcost} is obtained from the last step of the dynamic programming recursion
\begin{align} \label{DP1}
  J_\mathcal{T}(x) &= c_\mathcal{T}(x) \\
  J_{t}(x)  &= \min_{(S_t, u) \in \mathbf{U}_K \times \Pi_{i\in S_t} \mathcal{U}_i } \mathbf{E}_w \left[ c_{t}(x,u^{S_t}) + J_{t+1}(f_{\theta_t}^{S_t}(x, u^{S_t}, w)) \right], \label{DP2}
\end{align}
and corresponding optimal architecture-policies are obtained via
\begin{equation} \label{DPpolicy}
    \pi_t^*(x) \in \underset{(S_t, u) \in \mathbf{U}_K \times \Pi_{i\in S_t} \mathcal{U}_i }{\arg \min} \mathbf{E}_w \left[ c_{t}(x,u^{S_t}) + J_{t+1}(f_{\theta_t}^{S_t}(x, u^{S_t}, w)) \right].
\end{equation}
\end{thm}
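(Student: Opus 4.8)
The plan is to recognize \eqref{eq:3} as a finite-horizon controlled Markov decision process whose action at each stage is the \emph{pair} $(S_t, u) \in \mathbf{U}_K \times \Pi_{i \in S_t} \mathcal{U}_i$, and then to invoke the principle of optimality exactly as in standard stochastic dynamic programming. The crucial structural fact, already noted before the statement, is that since $\theta_t$ is known and the disturbances $w(t)$ are iid, the closed-loop state process is Markov under any state-feedback architecture-policy; hence it suffices to search over policies of the form $u^{S_t}(t) = \bar\pi_t(x(t))$, discarding dependence on the full history $(x_{0:t}, u_{0:t-1})$. First I would make this reduction precise, appealing to the classical result that for a finite-horizon problem with perfect state information an optimal policy may be taken Markovian and deterministic, so that the history-dependent minimization in \eqref{optimalcost} coincides with one over state-feedback policies.

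Next I would define the cost-to-go functions $J_t(x)$ as the optimal expected cost accumulated from time $t$ onward given $x(t) = x$, and establish the recursion \eqref{DP1}--\eqref{DP2} by backward induction on $t$. The base case $J_\mathcal{T}(x) = c_\mathcal{T}(x)$ is immediate from the definition of the terminal cost. For the inductive step I would split the objective into the stage cost $c_t(x, u^{S_t})$ incurred at time $t$ plus the remaining cost, condition on the state-action pair at time $t$, and use the tower property of conditional expectation together with the Markov property to replace the conditional expectation of the tail by $\mathbf{E}_w\big[J_{t+1}(f_{\theta_t}^{S_t}(x, u^{S_t}, w))\big]$. Interchanging the minimization over the tail policy (which, by the induction hypothesis, returns $J_{t+1}$) with the expectation then yields the Bellman equation \eqref{DP2}, and any argmin of the right-hand side defines the optimal architecture-policy \eqref{DPpolicy}; the last step $J_0(x) = J^*(x)$ gives the optimal cost.

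The only genuine subtlety — as opposed to routine bookkeeping — is the joint minimization over the hybrid action space: at each stage we optimize simultaneously over the \emph{discrete} choice of actuator subset $S_t$ and the \emph{continuous} input $u$. I would handle this by treating $\mathbf{U}_K \times \Pi_{i \in S_t} \mathcal{U}_i$ as a single admissible action set and observing that the Bellman operator is indifferent to its product structure; concretely, one may minimize over $u$ for each fixed $S_t \in \mathbf{U}_K$ and then take the finite minimum over $S_t$. The point requiring care is the existence of a measurable minimizing selection, so that $\pi_t^*$ in \eqref{DPpolicy} is a well-defined policy; here I expect the main obstacle and would invoke a standard measurable selection theorem, valid under mild regularity (e.g. lower semicontinuity of $c_t$, continuity of $f_{\theta_t}^{S_t}$, and compactness of each $\mathcal{U}_i$). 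Under such conditions the induction closes and the recursion \eqref{DP1}--\eqref{DP2} returns $J^*$.
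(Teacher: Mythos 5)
Your proposal is correct and follows essentially the same route as the paper: a backward induction establishing the Bellman recursion, with the only structural novelty being that the minimization at each stage runs jointly over the hybrid action set $\mathbf{U}_K \times \Pi_{i\in S_t}\mathcal{U}_i$, handled exactly as you describe (inner minimization over $u$ for each fixed $S_t$, then a finite minimum over $S_t$). Your additional remarks on the history-to-Markov policy reduction and on measurable selection address regularity points that the paper's proof simply assumes away, so they strengthen rather than diverge from its argument.
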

\begin{proof}
The proof follows the same inductive argument as in standard dynamic programming \cite{bertsekas2012dynamic}, except that the minimization step is performed jointly over architectures and policies. For each time $t \in [0, \mathcal{T}-1]$, we define the tail architecture-policy $\pi^t = [\pi_t, \pi_{t+1}, ..., \pi_{\mathcal{T}-1}]$, where $\pi_t$ specifies $(S_t, \bar \pi_t)$ with $u^{S_t}(t) = \bar \pi_t(x(t))$. We then define the optimal cost-to-go functions at time $t$ for state $x(t) = x$
\begin{equation*}
    J_t^*(x) = \min_{\pi^t} \mathbf{E} [\sum_{\tau = t}^{\mathcal{T}-1} c_\tau(x(\tau), \bar \pi_\tau(x(\tau))) + c_\mathcal{T}(x(\mathcal{T}))]
\end{equation*}
with expectation with respect to tail disturbance sequences. We define $J^*_{\mathcal{T}}(x) = c_\mathcal{T}(x)$, establishing the inductive base case.

Now assume for some $t$ that $J^*_{t+1}(x) = J_{t+1}(x)$. We have
\begin{align*}
    J_t^*(x) &= \min_{\pi_t} \mathbf{E}_{w} [ c_t(x, \bar \pi_t(x)) + [\min_{\pi^{t+1}} \mathbf{E} [\sum_{\tau = t+1}^{\mathcal{T}-1} c_\tau(x(\tau), \bar \pi_\tau(x(\tau)))] ] \\
    &= \min_{\pi_t} [\mathbf{E}_{w} c_t(x, \bar \pi_t(x)) + J^*_{t+1}(x^+) ] \\
    &= \min_{\pi_t} [\mathbf{E}_{w} c_t(x, \bar \pi_t(x)) + J^*_{t+1}(f_{\theta_t}^{S_t}(x, \bar \pi_t(x), w)) ] \\
    &= \min_{\pi_t} [\mathbf{E}_{w} c_t(x, \bar \pi_t(x)) + J_{t+1}(f_{\theta_t}^{S_t}(x, \bar \pi_t(x), w)) ] \\
    &= \min_{(S_t, u) \in \mathbf{U}_K \times \Pi_{i\in S_t} \mathcal{U}_i } \mathbf{E}_w \left[ c_{t}(x,u^{S_t}) + J_{t+1}(f_{\theta_t}^{S_t}(x, u^{S_t}, w)) \right] \\
    &= J_t(x),
\end{align*}
where the first step follows from the Principle of Optimality and Markovian structure of the dynamics, the second by definition of $J^*_{t+1}$, the third by the state update equation, the fourth by the induction hypothesis, and the fifth by definition of the architecture-policy. This establishes the induction step, and the proof is complete.
\end{proof}

\subsection{Self-Tuning LQR Architectures}
We now consider a further special case  with linear dynamics and convex quadratic costs. The nodal states are real-valued vectors $x_i(t) \in \mathcal{X}_i = \mathbf{R}^{n_i}$, and the network state is then $x(t) \in \mathcal{X} = \mathbf{R}^N$ with $N = \sum_i n_i$. The inputs are also real-valued vectors $u_i(t) \in \mathcal{U}_i = \mathbf{R}^{m_i}$. In this setting the set of possible actuator locations can be identified with a set of columns that can be used to construct an input matrix.
In particular, we can set $\mathbf{U} = \{b_1, b_2, ..., b_M \}$, where $b_i \in \mathbf{R}^N$. For $S_t \subset \mathbf{U}$, we form the input matrix 
\begin{align}
  B^{S_t} = 
  \begin{bmatrix}
    | & | & \cdots & | \\
    b_{i_1} & b_{i_2} & \cdots & b_{i_k}\\
    | & | & \cdots & | 
  \end{bmatrix} \in \mathbf{R}^{N \times |S_t|},
\end{align}
For $|S_t| = K$, there are $\binom{n}{K}$ possible input matrices, up to permutation of the columns. We can identify this set with $\mathbf{U}_K$ so that $\mathbf{U}_K = \{  B^{S_t} \in \mathbf{R}^{N \times |S_t|} \mid S_t \subset \mathbf{U}, |S_t| = K \} $.
The system dynamics with active input matrix $B^{S_t}$ at time $t\in\{0,\dots,\mathcal{T}-1\}$ is given by
\begin{align}
  x(t+1) = Ax(t) + B^{S_t} u^{S_t}(t) + w(t),
\end{align}
where $w(t)$ is an iid $\mathbf{R}^N$-valued zero mean random vector with covariance matrix $W$. The following result establishes a more explicit structure for the optimal cost functions and architecture-policies.

\begin{thm}
Consider the linear quadratic optimal control problem over state-feedback architecture policies $\pi$
\begin{align}
  \begin{array}{ll}
    \min_\pi & \mathbf{E}_{x_0,w}\left[\sum_{t=0}^{\mathcal{T}-1} x_{t}^\tra Q_{t}x_{t} + u_t^\tra  R_t u_t  \right] + x^\tra(\mathcal{T}) Q_\mathcal{T} x(\mathcal{T}) \\
    \mathrm{subject \ to} & x(t+1) = Ax(t) + B^{S_t} u^{S_t}(t) + w(t), \\
         & B^{S_t} \in \mathbf{U}_K.
  \end{array}\label{eq:1}
\end{align}
The optimal cost-to-go functions and policies obtained from the dynamic programming recursion \eqref{DP1}, \eqref{DP2}, \eqref{DPpolicy} for problem~\eqref{eq:1} are piecewise quadratic and piecewise linear, respectively, defined over a finite partition of the state space $\mathbf{R}^N$.
\end{thm}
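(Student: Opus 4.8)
The plan is to prove both assertions at once by backward induction on the dynamic programming recursion \eqref{DP1}--\eqref{DP2}, carrying the slightly stronger hypothesis that each $J_{t+1}$ is a \emph{pointwise minimum of finitely many quadratics}, $J_{t+1}(x) = \min_{j \in \mathcal{I}_{t+1}} \big( x^\tra P_j^{t+1} x + c_j^{t+1} \big)$. Any function of this form is automatically piecewise quadratic over the finite partition whose cells $\{x : j \text{ attains the minimum}\}$ are carved out by the quadratic inequalities $x^\tra (P_j^{t+1} - P_{j'}^{t+1}) x \le c_{j'}^{t+1} - c_j^{t+1}$, so this hypothesis immediately yields the stated geometry. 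The base case is trivial, since $J_\mathcal{T}(x) = x^\tra Q_\mathcal{T} x$ is a single quadratic (index set of size one).

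For the inductive step I would exploit that the minimization over the \emph{discrete} actuator family $\mathbf{U}_K$, the minimization over the active tail piece $j$, and the minimization over $u$ are all infima and hence commute. Pulling the two discrete minima outside (legitimate once the expectation has been dealt with, see below) gives
\begin{equation*}
  J_t(x) = \min_{(S,j) \in \mathbf{U}_K \times \mathcal{I}_{t+1}} \ \min_u \ \mathbf{E}_w\big[ x^\tra Q_t x + u^\tra R_t u + q_j^{t+1}(Ax + B^S u + w) \big],
\end{equation*}
so that for each fixed pair $(S,j)$ the inner problem is an \emph{unconstrained} quadratic minimization in $u$ against a \emph{single} quadratic tail $q_j^{t+1}$. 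Standard completion of squares then produces the Riccati value $x^\tra P^{S,j}_t x + c^{S,j}_t$ with $P^{S,j}_t = Q_t + A^\tra P_j^{t+1} A - A^\tra P_j^{t+1} B^S (R_t + (B^S)^\tra P_j^{t+1} B^S)^{-1} (B^S)^\tra P_j^{t+1} A$, a minimizer $u^{S,j}_t(x) = -K^{S,j}_t x$ that is \emph{linear} in $x$, and a constant $c^{S,j}_t$ absorbing the trace term $\Tr(P_j^{t+1} W)$. Taking the outer discrete minimum exhibits $J_t$ again as a finite pointwise minimum of quadratics indexed by $\mathcal{I}_t = \mathbf{U}_K \times \mathcal{I}_{t+1}$, which closes the induction; on each resulting cell the optimizing pair $(S,j)$ is constant and the policy coincides with the linear map $-K^{S,j}_t x$, giving exactly the asserted piecewise-linear structure. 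Finiteness of the partition at every stage is clear since $|\mathcal{I}_t| = |\mathbf{U}_K|\,|\mathcal{I}_{t+1}|$ stays finite over the finite horizon.

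The step I expect to be the genuine obstacle is the interaction between the expectation $\mathbf{E}_w$ and the \emph{multi-piece} tail. The commuting-minima argument above is valid only after the expectation has been pushed through $q_j^{t+1}$, i.e. it tacitly uses $\mathbf{E}_w[\min_j q_j^{t+1}(\cdot + w)] = \min_j \mathbf{E}_w[q_j^{t+1}(\cdot + w)]$, and this identity fails for a genuinely continuous disturbance because the additive noise smears the argument across the quadratic region boundaries. For a \emph{single} quadratic tail the interchange is trivial (the zero-mean noise contributes only the constant $\Tr(PW)$), so the first backward step $t=\mathcal{T}-1$ is unconditionally piecewise quadratic and the subtlety surfaces only from $t=\mathcal{T}-2$ onward. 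I would therefore first establish the result rigorously in the deterministic / certainty-equivalent reading (where $w$ vanishes or enters only through additive constants), for which the commuting-minima induction delivers the piecewise-quadratic cost and piecewise-linear policy exactly, and then treat the exact stochastic value separately: the swapped quantity $\min_j \mathbf{E}_w[q_j^{t+1}(\cdot+w)]$ is piecewise quadratic and, since $\mathbf{E}_w[\min_j(\cdot)] \le \min_j \mathbf{E}_w[\cdot]$, furnishes a piecewise-quadratic \emph{upper bound} on $J_t$, after which I would isolate what extra structure on $P_w$ is needed to retain equality. Handling this expectation-versus-minimum exchange honestly is where essentially all of the real work lies.
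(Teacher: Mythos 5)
Your proposal follows the same route as the paper's own proof: backward induction in which each cost-to-go is represented as a pointwise minimum of finitely many quadratics, the discrete minimizations (over $\mathbf{U}_K$ and over the active tail piece) are pulled outside the minimization over $u$, and a per-piece completion of squares yields the Riccati matrices $P_t^{S,j}$, linear gains, and trace constants; the base case and the first backward step are identical in both arguments. The difference is that you stop to examine a step the paper takes silently, and you are right that this is where the real content lies. In the paper's induction, $\mathbf{E}_w\left[J_t(Ax+B_{t-1}u+w)\right]$ is expanded as a single quadratic form in $(x,u)$ with one matrix $P_t^{B_t}$ plus $\mathbf{tr}(P_t^{B_t}W) + q_t(B_t(x^+))$, which amounts to assuming that the piece of $J_t$ active at the random next state $x^+=Ax+B_{t-1}u+w$ is deterministic and unaffected by $w$ --- precisely the interchange $\mathbf{E}_w[\min_j q_j^{t+1}(\cdot+w)] = \min_j\mathbf{E}_w[q_j^{t+1}(\cdot+w)]$ that you flag as invalid for continuous noise. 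The failure is not cosmetic: for Gaussian $w$, the map $y\mapsto\mathbf{E}_w[J_{\mathcal{T}-1}(y+w)]$ is the Gaussian convolution of a genuinely multi-piece quadratic function and is therefore real-analytic; a real-analytic function that agrees with a quadratic on an open cell of a finite partition agrees with it everywhere, so this map cannot be piecewise quadratic over a finite partition unless $J_{\mathcal{T}-1}$ effectively has one piece. Hence from $t=\mathcal{T}-2$ backward the exact stochastic value functions are generically \emph{not} piecewise quadratic, and the theorem is rigorously established only in the setting your fallback covers --- $w\equiv 0$, or more generally noise whose effect enters only through additive constants (the certainty-equivalent recursion) --- where your commuting-minima induction is complete. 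So the ``genuine obstacle'' you identify is not a gap in your write-up relative to the paper; it is a gap in the paper's own proof that your version makes explicit, and your observation that performing the swap anyway yields a piecewise-quadratic \emph{upper bound} on the true $J_t$ is a correct and useful supplement that the paper does not contain.
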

\begin{proof}
The dynamic programming recursion for \eqref{eq:1} is
\begin{align}
  J_\mathcal{T}(x) &=  x^\tra Q_\mathcal{T} x  \\
  J_{t}(x)  &= \min_{(u,B^{S_t}) \in \mathbb{R}^K\times \mathbf{U}_K} \mathbb{E}\left[ x^\tra Q_{t}x + u^\tra R_{t}u + J_{t+1}(x^+) \right],\label{eq:recursion_dp}
\end{align}
where $x^+ = Ax + B^{S_t} u + w$. To lighten notation we denote the architecture optimization variable $B_t:=B^{S_t}$.
The recursion one timestep backwards from $\mathcal{T}$  can be split into a minimization over $u$ and over $B_{\mathcal{T}-1}$ as,
\begin{align}
  &J_{\mathcal{T}-1}(x)=  \min_{B_{\mathcal{T}-1}\in\mathbf{U}_K}~\min_{u \in \mathbb{R}^K}
  \mathbb{E}\left[ x^\top Q_{\mathcal{T}-1}x + u^\top R_{\mathcal{T}-1}u\right. \label{eq:jtmin1} \\
  & \left.+ (Ax + B_{\mathcal{T}-1} u + w)^\top Q_{\mathcal{T}}(Ax + B_{\mathcal{T}-1} u + w) \right].\nonumber
\end{align}
Evidently, the inner minimization over $u$ yields,
\begin{align}
 u_{\mathcal{T}-1}^*(x) = -\left(B^{S_{\mathcal{T}-1} \top} Q_{\mathcal{T}} B_{\mathcal{T}-1} + R\right)^{-1}B^{S_{\mathcal{T}-1}\top} Q_{\mathcal{T}}A x    .\label{eq:opt_u}
\end{align}
Substituting \eqref{eq:opt_u} into \eqref{eq:jtmin1} yields that, for a given $B_{\mathcal{T}-1}$, the function inside the minimum over $B_{\mathcal{T}-1}$ is quadratic.
Since there are a finite number of choices of $B_{\mathcal{T}-1}$, each depending on $x$, it follows that $J_{\mathcal{T}-1}(x)$ is piecewise quadratic, and $u_{\mathcal{T}-1}^*(x)$ piecewise linear.

The remainder of the proof can be done by induction on a standard ansatz, as follows.
Suppose $J_t(x) = x^\top P_t(B_t(x)) x + q_t(B_t(x))$, where $P_t(B_t(x))=:P_t^{B_t}$ and $q_t(B_t(x))$ depend on the choice of $B_t$ at time $t$, which in turn depends on $x$; in other words that $J_t(x)$ is piecewise quadratic.
We can write,
\begin{align}
  \begin{split}
    &J_{t-1}(x)   = \min_{B_{t-1}\in\mathbf{U}_K}~\min_{u \in \mathbb{R}^K}\Bigg[
    \mathbf{tr}(P_t^\Bt W) + q_t(B_t(x^+))+\\
\quad &
      \begin{bmatrix}
        x \\ u
      \end{bmatrix}^\top 
      \begin{bmatrix}
        A^\top  P_t^\Bt  A + Q_{t-1} & A^\top  P_t^\Bt  B_{t-1} \\
        B_{t-1}^\top P_t^\Bt  A & B_{t-1}^\top P_t^\Bt B_{t-1} + R_{t-1}
      \end{bmatrix}
      \begin{bmatrix}
        x \\ u
      \end{bmatrix}\Bigg]
  \end{split}\\
\begin{split}
      &=\min_{B_{t-1}\in\mathbf{U}_K} q_t(B_t(x^+))+\mathbf{tr}(P_t^\Bt W) +
     x^\top \left[ A^\top P_t^\Bt A + Q_{t-1} \right. \\  & - A^\top P_t^\Bt B_{t-1}\left( B_{t-1}P_t^\Bt B_{t-1} + R_{t-1} \right)^{-1}B_{t-1}^\top P_t^\Bt A \Big] x 
\end{split}\\
    &=\min_{B_{t-1}\in\mathbf{U}_K} \left[ x^\top P_{t-1}^\Btm x \right] + q_{t-1}(B_{t-1}(x)),\label{eq:2}
\end{align}
where $\mathbb{E}[w_t^TP_{t+1}(B_t(x))w_t] := \mathbf{tr}(P_t^{B_t}W)$, $\mathbf{tr}(P_t^\Bt W) + q_t(B_t(x^+))$ $:=q_{t-1}(B_{t-1}(x))$, and
where the problem in the last display can be solved by exhaustive search over the $\binom{n}{K}$ elements of $\mathbf{U}_K$.
It follows that \eqref{eq:2} is piecewise-continuous in $x$, and piecewise quadratic.
\end{proof}

\begin{figure}
    \centering
    \includegraphics[width=\columnwidth]{./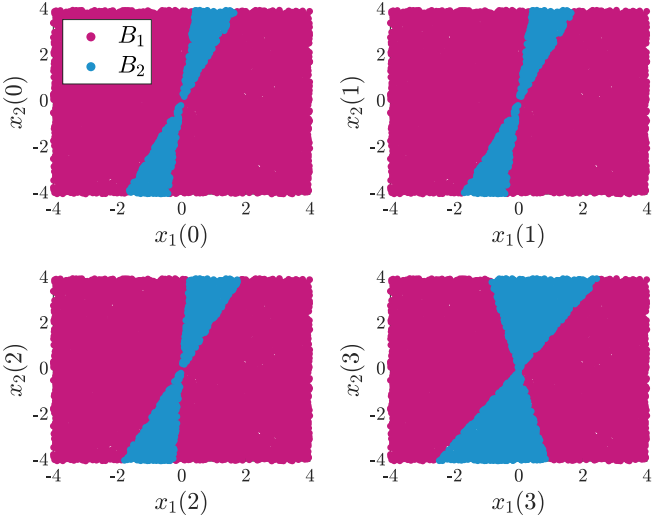}
    \caption{10000 points $x(t)\in\mathbb{R}^2$ of~\eqref{eq:eg} sampled in $[-4,4]^2$ over $\mathcal{T} = [0,\dots,4]$ with the optimal actuator highlighted at times $[0,\dots,3]$. Magenta denotes actuator 1, blue actuator 2.}
    \label{fig:partition}
\end{figure}

We show a numerical example where we highlight the `partition' of $\mathbf{R}^N$ formed by the piecewise continuity of the value function.
Consider the system,
\begin{align}
    A = 
    \begin{bmatrix}
       -2.2639&    0.6379\\
       -0.2619  &  0.6383
    \end{bmatrix},~B \in \left\{
        \begin{bmatrix}
        1\\0
        \end{bmatrix},~
        \begin{bmatrix}
        0\\1
        \end{bmatrix}\right\},\label{eq:eg}
\end{align}
with cost matrices $Q = \mathrm{blkdiag}\{1,2\}$ and $R=1$, and a time horizon of $|\mathcal{T}|=5$.
By exhaustive search over all trajectories of length $5$, we find the optimal actuator assignment given a starting point $x(0)\in[-4,4]^2$.
The optimal actuators given a point $x(t)$ for $t\in [0,\dots,3]$ are highlighted in Figure~\ref{fig:partition}.

This result provides an exact algorithm to construct the optimal cost-to-go functions and policies. However, the exhaustive search over architectures is computationally intractable for all but the smallest actuator set cardinalities. This motivates approximation algorithms for joint architecture-policy optimization. 

\subsection{A Greedy Heuristic for Self-Tuning LQR Architectures}
We now present a simple greedy heuristic to approximate an architecture-policy. For concreteness, we focus on self-tuning LQR architectures with a system identification component for a static dynamics matrix. The network dynamics are
\begin{equation}
    x(t+1) = A_{\theta} x(t) + B^{S_t} u^{S_t}(t) + w(t),
\end{equation}
where $w(t)$ is iid zero mean with covariance $W$.
Based on the state-input history $(x_{0:t}, u_{0:t-1})$ the dynamics parameter $\theta$ is identified using least-squares estimation, and then a greedy algorithm is used to select a subset of actuators of cardinality $K$ to optimize the infinite-horizon LQR control performance associated with the network model estimated. In particular, the approximate architecture-policy is
\begin{equation}
    u^{S_t^{greedy}}(t) = K^{S_t^{greedy}} x(t),
\end{equation}
where the computation of $K^{S_t^{greedy}}$ is described in Algorithm \ref{algo:greedy-arch-policy}. Although this greedy algorithm is suboptimal, it renders the joint architecture-policy design computationally tractable and, as we will see next, can still provide dramatically improved performance compared to fixed architectures.


\begin{algorithm}
\caption{Greedy Self-Tuning Architecture-Policy Approximation}
\begin{algorithmic}[1]
\label{algo:greedy-arch-policy}
    \REQUIRE State-input history $(x_{0:t}, u_{0:t-1})$ at time $t$, actuator location set $\mathbf{U} = \{b_1, b_2, ..., b_M \}$, cost matrices $Q$, $R$, actuator set cardinality $K$
    \STATE Identify dynamic parameter:\\ $\hat \theta = \arg \min_{\theta} \sum_{\tau=0}^{t-1} \| x(\tau+1) - (A_{\theta} x(\tau) + B^{S_\tau} u^{S_\tau}(\tau)) \|^2$.
    \STATE Initialize: $S_t = \emptyset$, $B^{S_t} = []$.
    \WHILE{$|S_t| < K$}
    \STATE $s^* = \arg \min_{s \in \mathbf{U}} x(t)^T P_s x(t)$ where $P_s = Q + A_{\hat \theta}^T P_s A_{\hat \theta} - A_{\hat \theta}^T P_s B_s (R + B_s^\top P_s B_s)^{-1} B_s^\top P_s A_{\hat \theta}$ and $B_s = [B^{S_1} \ b_s]$
    \STATE $S_t \leftarrow S_t \cup \{ s^* \}$, $B^{S_t} = [B^{S_t} \ b_{s^*}]$
    \ENDWHILE
    \STATE $P = Q + A_{\hat \theta}^T P A_{\hat \theta} - A_{\hat \theta}^T P B^{S_t } (R + B^{S_t \top} P B^{S_t})^{-1} B^{S_t \top} P A_{\hat \theta}$
    \STATE $K^{S_t^{greedy}} = -(R + B^{S_t \top} P B^{S_t})^{-1} B^{S_t} P A_{\hat \theta}$
\ENSURE $u^{S_t^{greedy}}(t) = K^{S_t^{greedy}} x(t)$
\end{algorithmic}
\end{algorithm}







\section{Numerical Experiments} \label{sec:equivalence}
\subsection{A Simple Example}
We begin with a simple example that clearly demonstrates the potential benefits of a self-tuning network control architecture over a fixed architecture. Consider the system
\begin{equation}
    x(t+1) = A_{\theta_t} x(t) + B^{S_t} u(t) + w(t),
\end{equation}
where the dynamics matrix switches occasionally (the exact nature of the switching is not needed to qualitatively illustrate the basic benefits) between two possible values $A_{\theta_t} \in \left\{\left[\begin{array}{cc}1 & 0.5 \\0.5 & 1\end{array}\right], \left[\begin{array}{cc}1 & -0.5 \\-0.5 & 1\end{array}\right]  \right\}$, the control architectures consist of two possible actuator locations $\mathbf{U} = \{b_1 = [1, 1]^T, b_2 = [1, -1]^T \}$, and the disturbance is zero-mean Gaussian $w(t) \sim \mathcal{N}(0, \sigma^2 I)$. We compare a fixed architecture with $B^{S_t} = b_1 \ \forall t$ and a self-tuning architecture that allows switching between the two actuators (with only 1 active at each time) based on the current state and (an estimate of) which dynamics matrix is active.

A close examination reveals that the pair $(A_2, b_1)$ is unstabilizable, so under the fixed architecture the state grows without bound when $A_2$ is active. It is not difficult to define a switching signal where $A_2$ is active sufficiently often and the closed-loop is unstable, resulting in infinite cost. With such a fixed architecture, no feedback controller, adaptive or otherwise, can possibly stabilize the network dynamics. On the other hand, the pair $(A_2, b_2)$ is stabilizable. If it is known (or can be identified sufficiently fast) which dynamic matrix is active during certain time intervals, obviously it is highly advantageous to activate actuator $b_2$ whenever dynamics matrix $A_2$ is active. With a self-tuning architecture, the network can be easily stabilized with near optimal cost, even when only one actuator is activated at each time step.

This simple example demonstrates the significant potential value of self-tuning architectures. More generally, self-tuning architectures will be valuable whenever the network state and/or dynamics change in ways that render an existing set of sensors and actuators ineffective, and alternative, more effective sets are available to activate. The challenges lie in detecting or estimating such changes from data as the network evolves and employing effective heuristics for joint architecture and feedback control design.


\subsection{A Self-Tuning LQR Example}
In this section, we describe a larger self-tuning LQR example. A simple greedy actuator selection heuristic for joint architecture and control design enables a self-tuning architecture to provide dramatically improved performance over a fixed architecture. This improvement is realized even with known linear time-invariant dynamics, with the actuator set selected at each time based on the current state.

We consider 50-node network with a scalar state for each node and a randomly generated unstable dynamics matrix. The set of possible actuators consists of the standard unit basis vectors $\mathbf{U} = \{ e_{1},...,e_{25} \}$, so each actuator can inject an input signal at each node. The cost matrices are $Q_t = I_n$ and $R_t = I_{|S_t|}$, i.e., the input cost matrix is identical for every set of actuators. The disturbance is iid with $w(t) \sim \mathcal{N}(0, 1e-4)$. We randomly generated in initial state $x_0 \sim \mathcal{N}(0, 25)$. The number of actuators available at each time is limited to $K=2$. We simulated the network dynamics with a fixed architecture $B = [e_1 \ e_2]$ using the optimal LQR policy, and we simulated with a self-tuning architecture using a greedy heuristic as described above. Figure \ref{fig:cost} shows a comparison of typical costs for each architecture, and Figure \ref{fig:states} shows corresponding state trajectories. The cost of the fixed architecture is far worse, a factor 80x more, than the cost of the self-tuning architecture. Code and problem data for implementing the algorithm can be found at \cite{gitlabcode}.
\begin{figure}[ht]
    \centering
    \includegraphics[width=\linewidth]{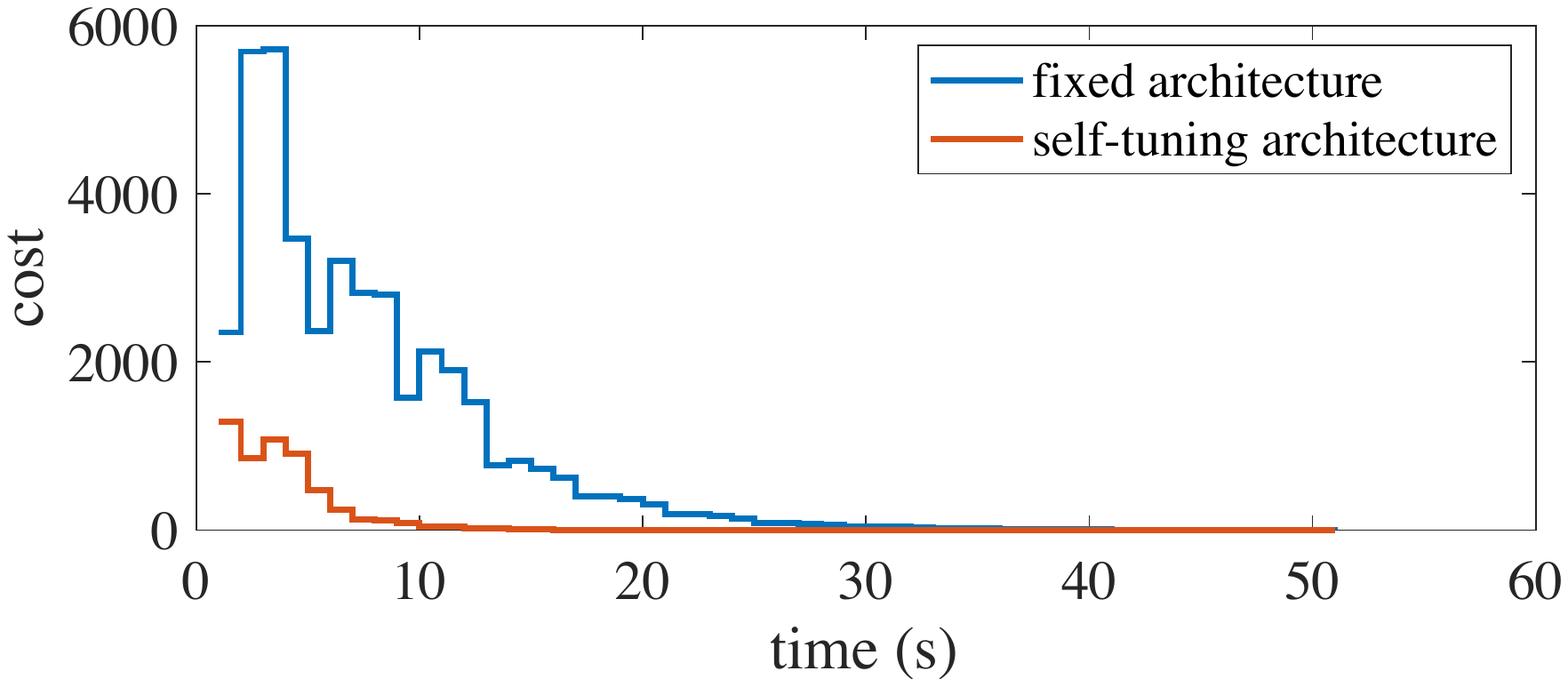}
    \includegraphics[width=\linewidth]{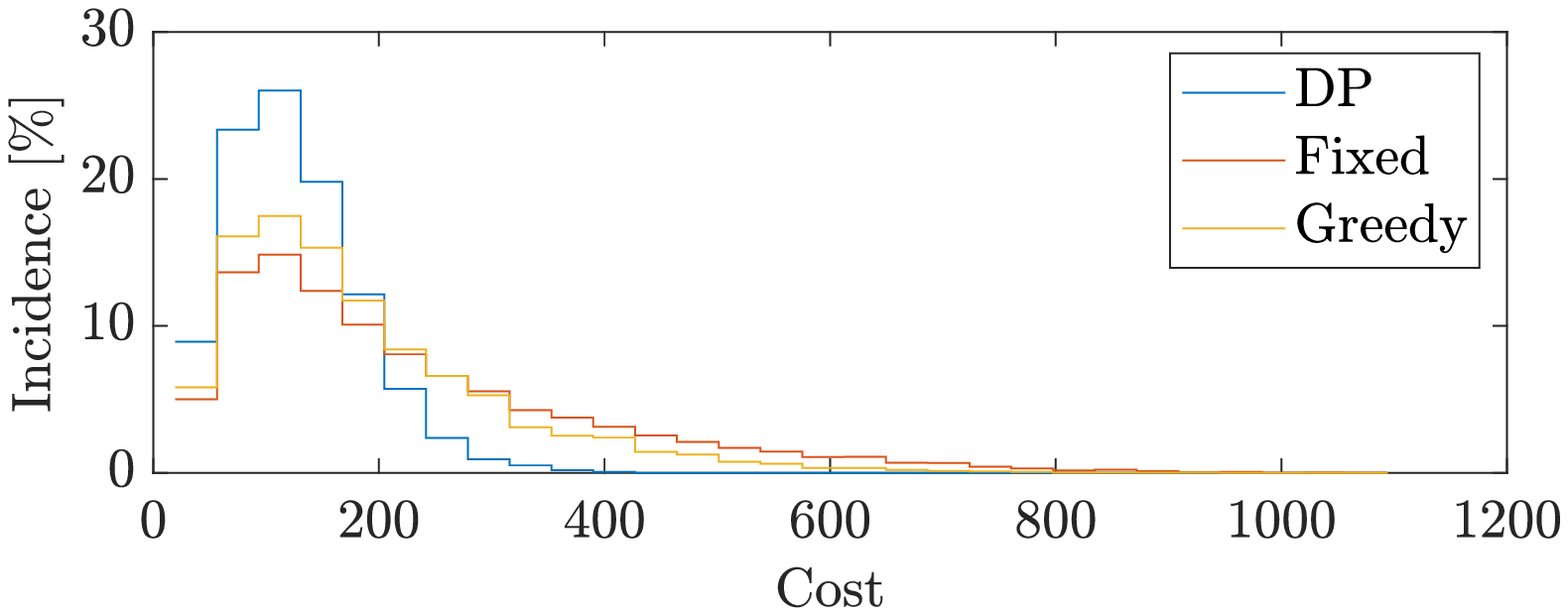}
    \caption{Cost comparison, fixed vs. self-tuning architecture.}
    \label{fig:cost}
\end{figure}

\begin{figure}[ht]
    \centering
    \includegraphics[width=\linewidth]{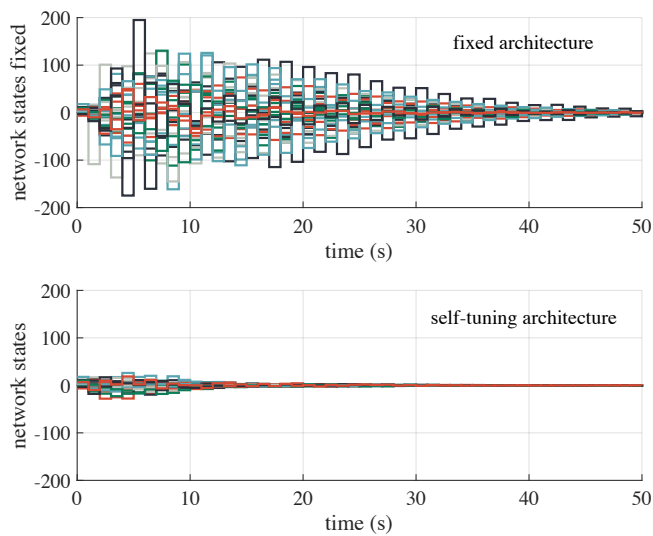}
    \caption{State trajectories, fixed vs. self-tuning architecture.}
    \label{fig:states}
\end{figure}


\section{Conclusions and future work} \label{sec:conclusion}
We formulated a general mathematical framework for self-tuning network control architecture design and proposed a general solution structure analogous to the classical self-tuning regulator from adaptive control.

We have barely scratched the surface and believe there are many opportunities to apply a wide variety of approximation methods from stochastic control, system identification, reinforcement learning, and static architecture design.


\bibliographystyle{IEEEtran}
\bibliography{bibliography.bib}
\end{document}